\DeclareSymbolFontAlphabet{\mathbb}{AMSb}
\DeclareSymbolFontAlphabet{\mathbbl}{bbold}
\newcommand{\st}{\;|\;}
\newcommand{\textfrac}[2]{{\textstyle\frac{#1}{#2}}}
\DeclareMathOperator{\Exp}{Exp}
\newcommand{\lie}{\mathfrak}
\newcommand{\UU}{\mathbb{U}}
\newcommand{\RR}{\mathbb{R}}
\newcommand{\TT}{\mathbb{T}}
\newcommand{\XX}{\mathbb{X}}
\newcommand{\YY}{\mathbb{Y}}
\newcommand{\bbnabla}{\boldsymbol{\nabla}}
\newcommand{\XH}{\mathfrak{X}_H}
\newcommand{\tHM}[1][]{\lie{t}^{#1}_HM}
\newcommand{\THM}{T_HM}
\newcommand{\ttHM}{\mathbbl{t}_HM}
\newcommand{\ttM}{\mathbbl{t}M}
\newcommand{\TTHM}{\mathbb{T}_HM}
\newcommand{\aHG}[1][]{\lie{a}_H^{#1}G}
\newcommand{\AG}{\mathrm{A}G}
\newcommand{\AHG}{\mathrm{A}_H G}
\newcommand{\AAHG}{\mathbb{A}_HG}
\newcommand{\aaHG}{\mathbbl{a}_HG}
\newcommand{\EExp}{\mathbb{E}\mathrm{xp}}
\newcommand{\ev}{\mathrm{ev}}
\newcommand{\id}{\mathrm{id}}
\newcommand{\slot}{\,\cdot\,}
\newcommand{\anchor}{\rho}
\newcommand{\defeq}{:=}
\newtheorem{theorem}{Theorem}
\newtheorem{lemma}[theorem]{Lemma}
\newtheorem{proposition}[theorem]{Proposition}
\theoremstyle{definition}
\newtheorem{definition}[theorem]{Definition}
\newtheorem{example}[theorem]{Example}
\theoremstyle{remark} 
\newtheorem{remark}[theorem]{Remark}
\title{On the tangent groupoid of a filtered manifold}
\author{Erik van Erp}
\address{Department of Mathematics, Dartmouth College, Hanover, NH 03755, USA}
\email{erikvanerp@dartmouth.edu}
\author{Robert Yuncken}
\thanks{R.~Yuncken was supported by the project SINGSTAR of the Agence Nationale de la Recherche, ANR-14-CE25-0012-01.}
\address{Université Clermont Auvergne, Université Blaise Pascal, BP 10448, F-63000 Clermont-Ferrand, France}
\email{yuncken@math.univ-bpclermont.fr}
\date{}         
\subjclass[2010]{Primary: 58H05; Secondary: 22A22, 58J40, 35S05, 47G30}
\keywords{Tangent groupoid; Lie groupoids; filtered manifolds}
\begin{document}

\begin{abstract}
 We give an intrinsic (coordinate-free) construction of the tangent groupoid of a filtered manifold.  This is an analogue of Connes' tangent groupoid which is pertinent for the analysis of certain subelliptic differential operators.  It is a deformation of the pair groupoid to a  bundle of nilpotent groups.  We also describe the analogous construction in the context of the adiabatic groupoids of a filtered Lie groupoid.
 \end{abstract}

\maketitle


\section{Introduction}

Connes' tangent groupoid 
\[  \TT M = (M\times M) \times \RR^\times \; \sqcup \; TM \times \{0\} \]
provides a powerful conceptual framework for the study of pseudodifferential operators. 
Connes famously used it in a proof of the Atiyah-Singer Index Theorem \cite{Connes:NCG}.  

Recall that pseudodifferential operators on a manifold $M$ are defined by Schwartz kernels on $M\times M$ with singularities on the diagonal belonging to some prescribed class.  
The power of the tangent groupoid lies in the smooth gluing of the fibres $M\times M$ (which carries the Schwartz kernel of a pseudodifferential operator) to the tangent space $TM$ (which carries the principal symbol).

By now we have tangent groupoids associated to many different classes of pseudodifferential operators.  One example is the Heisenberg calculus which is relevant to contact and CR geometries.  In this case, the tangent bundle $TM$ is replaced by a bundle of Heisenberg groups $\THM$ called the \emph{osculating groupoid}.  The Heisenberg calculus was developed by Folland-Stein \cite{FolSte:Estimates} and Taylor \cite{Taylor:Microlocal} in the 1970s.
An adaptation of the  tangent groupoid to the Heisenberg calculus was developed in \cite{VanErp:Thesis, Ponge:groupoid}, which led to the index theorem for the Heisenberg calculus  \cite{VanErp:I, VanErp:II}.

Taking this further, one would expect analogous constructions in the context of filtered manifolds.    In heuristic terms, a filtered manifold (called a \emph{Carnot manifold} by Choi-Ponge \cite{ChoPon}) is a manifold for which some  vector fields are viewed as differential operators of degree greater than one.

We have shown in \cite{VanYun:PsiDOs} that a pseudodifferential calculus for filtered manifolds, à la Melin \cite{Melin:Lie_filtrations}, can be produced from a tangent groupoid for filtered manifolds.  The relevant tangent groupoid appears in Choi-Ponge \cite{ChoPon}, with a construction based on preferred local coordinate systems.  For our work we needed an instrinsic (coordinate-free) construction of the same groupoid.  The goal of this paper is to describe this construction.

The results in this paper were originally contained in the preprint \cite{VanYun:PsiDOs}.  Following a referee's suggestion, that paper has been split into two conceptually distinct pieces: the present part on the tangent groupoid, and the second part on pseudodifferential calculi.

\subsection{Statement of results}

In \cite[\S II.5]{Connes:NCG} Connes constructs the smooth structure of $\TT M$ near  $t=0$  by means of a  map $TM\times\RR \to \TT M$, 
 \begin{equation*}
   (x,v,t) \mapsto 
  \begin{cases}
    (\exp^{\nabla}(tv),x,t), & t\neq 0 \\
   (x,v,0), & t=0.
  \end{cases}
 \end{equation*}
Here $\exp^\nabla:TM\to M$ is the exponential map associated to  a connection $\nabla$ on $TM$.
Adapting this construction to filtered manifolds is  more challenging than it might seem at first.

\begin{definition}
\label{def:filtered_manifold}
A {\em filtered manifold} is a smooth manifold $M$ equipped with a filtration of the tangent bundle $TM$ by vector bundles $M\times\{0\} = H^0 \leq H^1 \leq \cdots \leq H^N = TM$ such that $\Gamma^\infty(H^\bullet)$ is a Lie algebra filtration---\emph{i.e.},
\[[ \Gamma^\infty(H^i), \Gamma^\infty(H^j) ] \subseteq \Gamma^\infty(H^{i+j}).\]
Here we are using the convention that $H^i = TM$ for $i>N$.  
Elements of $\Gamma^\infty(H^i)$ may be referred to as \emph{vector fields of order (less than or equal to) $i$}.
\end{definition}

Given such a filtration on $TM$, the associated graded bundle $\tHM \defeq \bigoplus_i H^i/H^{i-1}$ inherits the structure of a bundle of nilpotent Lie algebras over $M$---see Definition \ref{def:osculating_algebroid}.  The associated bundle of simply connected nilpotent Lie groups $\THM$ is called the \emph{bundle of osculating groups} or the \emph{osculating Lie groupoid}.  

\begin{theorem}
\label{thm:intro}
 Let $M$ be a filtered manifold.  Consider the family of Lie groupoids $(G_t)_{t\in\RR}$ with
 \[
  G_t \defeq \begin{cases}
    M\times M, & \text{if }t\neq 0 \\
    \THM, &\text{if } t=0.
  \end{cases}
 \]
 There is a canonical smooth structure on the union
 \begin{align*}
  \TTHM &= \bigsqcup_{t\in\RR}  G_t \nonumber 
    = (M\times M) \times \RR^\times \; \sqcup \; \THM \times \{0\}, \label{eq:TTHM}
 \end{align*}
 which is compatible with the smooth structures on the two components and makes $\TTHM$ into a Lie groupoid.
\end{theorem}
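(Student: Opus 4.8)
The plan is to build an explicit atlas for $\TTHM$ — over $\RR^\times$ the evident product chart of $(M\times M)\times\RR^\times$, and near $t=0$ a family of charts straddling the singular fibre — and then to check that the structure maps are smooth. Two ingredients are needed. First, the \emph{dilations}: writing a point of the osculating algebroid $\tHM$ over $x$ as $v=v_1+\cdots+v_N$ with $v_i\in(H^i/H^{i-1})_x$, set $\delta_\lambda v\defeq \lambda v_1+\lambda^2 v_2+\cdots+\lambda^N v_N$ for $\lambda\in\RR$; for $\lambda\neq0$ this is a bundle automorphism of the graded nilpotent Lie algebra $\tHM$, hence — through the fibrewise Lie-group exponential $\tHM\xrightarrow{\ \sim\ }\THM$, which I use throughout to identify the osculating algebroid with the osculating group — a bundle automorphism $\delta_\lambda$ of $\THM$ with $\delta_\lambda\delta_\mu=\delta_{\lambda\mu}$. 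Second, an \emph{exponential map adapted to the filtration}: a choice of auxiliary data (a connection, or a local frame of vector fields respecting $H^\bullet$) yielding, near the zero section, a smooth fibre-preserving diffeomorphism $\EExp\colon\mathcal{U}\subseteq\tHM\to M\times M$ onto a neighbourhood of the diagonal, with $\EExp(x,0)=(x,x)$ and source $x$, whose derivatives along the zero section are adapted to $H^\bullet$ (precisely the condition that makes the rescaled limits below exist). The near-$t=0$ chart is
\[
 \phi(x,v,t)\defeq
 \begin{cases}
   \bigl(\EExp_x(\delta_t v),\ x,\ t\bigr), & t\neq0,\\
   (x,\ v,\ 0), & t=0,
 \end{cases}
\]
with domain $\{(x,v,t)\st\delta_t v\in\mathcal{U}_x\}\subseteq\tHM\times\RR$; here $\EExp_x(\slot)$ denotes $\EExp(x,\slot)$ with the redundant base coordinate suppressed.

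The decisive step, and the main obstacle, is to show that two such charts — for two choices of adapted exponential map — overlap smoothly, transversally to $\{t=0\}$. A transition map has the form $(x,v,t)\mapsto\bigl(x,\ \delta_t^{-1}F_x(\delta_t v),\ t\bigr)$ for $t\neq0$, where $F_x=(\EExp'_x)^{-1}\circ\EExp_x$ is a local diffeomorphism of the fibre $(\tHM)_x$ fixing the origin; the claim is that the $\delta_t$-conjugate $\delta_t^{-1}F_x\delta_t$ extends smoothly (jointly in $(x,v,t)$) to $t=0$, with limiting value the graded principal part of $F_x$. This is exactly where the filtration enters: the adaptedness condition forces the Taylor expansion of $F_x$, block by block, to contain in the weight-$i$ block only monomials of weight $\geq i$, so that conjugating by $\delta_t$ produces no negative powers of $t$, while a Hadamard-lemma argument absorbs the non-polynomial remainder into a smooth function of $(x,v,t)$. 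Granting this, $\TTHM$ acquires a smooth manifold structure; taking $F_x=\id$ shows each $\phi$ is compatible with the product structure on $(M\times M)\times\RR^\times$, so that piece is an open submanifold, and restricting $\phi$ to $\{t=0\}$ shows $\THM\times\{0\}$ is a closed submanifold carrying its given smooth structure; and the charts visibly cover $\TTHM$. (One can repackage all of this via the global zoom action $\zoom_\lambda$: $\zoom_\lambda(x,y,t)=(x,y,\lambda^{-1}t)$ on $M\times M\times\RR^\times$ and $\zoom_\lambda(\xi,0)=(\delta_\lambda\xi,0)$ on $\THM$, with $\phi$ intertwining $(x,v,t)\mapsto(x,\delta_\lambda v,\lambda^{-1}t)$ with $\zoom_\lambda$; the smooth structure is then characterised by $\zoom$-equivariance together with smoothness on each of the two pieces.)

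It remains to verify that the groupoid operations are smooth. The source is constant in the chart, $s\circ\phi(x,v,t)=x$; the target is $\EExp_x(\delta_t v)$ for $t\neq0$ and the base point $x$ at $t=0$, which is smooth since $\delta_t v\to0$. Units and inversion reduce to the conjugation mechanism above. For multiplication, $\phi(x,v,t)$ and $\phi(x',v',t)$ are composable precisely when $\EExp_{x'}(\delta_t v')=x$ for $t\neq0$ (resp. $x=x'$ for $t=0$), and on the domain of composability the product equals $\phi(x',w,t)$ with $\delta_t w=\EExp_{x'}^{-1}\bigl(\EExp_x(\delta_t v)\bigr)$, where $x=\EExp_{x'}(\delta_t v')$ depends smoothly on $(x',v',t)$; unwinding, $w$ is a composite of maps of exactly the $\delta_t$-conjugated type controlled in the previous paragraph, so multiplication is smooth, and its $t=0$ value is the fibrewise group multiplication of $\THM$ — which is the classical fact that the osculating group is the graded nilpotent approximation of the pair groupoid along the diagonal. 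Independence of all choices is then immediate from the smooth-overlap step, and one checks that the resulting object coincides with the coordinate construction of Choi--Ponge.
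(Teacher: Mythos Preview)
Your approach is sound but takes a genuinely different route from the paper's. The paper does not build the smooth structure on $\TTHM$ directly; instead it first constructs the Lie \emph{algebroid} $\ttHM$ --- by defining its module of sections $\XH\subset\Gamma^\infty(TM\times\RR)$ (those sections whose $k$th $t$-derivative at $t=0$ lands in $\Gamma^\infty(H^k)$), checking this is a projective module (this is where the $\delta_t$-conjugation estimate appears, but only in its \emph{linear} form, for $F_x=\varphi^{-1}\circ\psi$ a change of splitting), and verifying that the bracket and anchor of $\Gamma^\infty(TM\times\RR)$ restrict to $\XH$. It then invokes Nistor's integration theorem \cite{Nistor:integration,BenNis} to produce $\TTHM$, so smoothness of multiplication and inversion come for free. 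Your approach is more elementary --- no black-box integration result --- and closer in spirit to Choi--Ponge's privileged-coordinate construction; the price is that you must check the groupoid operations by hand, and that is precisely where your sketch becomes thin. The multiplication step is genuinely harder than the overlap step: in the composite $\delta_t^{-1}\EExp_{x'}^{-1}\EExp_x(\delta_t\,\cdot\,)$ the base point $x=\EExp_{x'}(\delta_t v')$ itself moves with $t$, so this is not literally ``of exactly the $\delta_t$-conjugated type'' from your overlap paragraph; you need a version of the weight estimate that is uniform in an extra parameter and that actually recovers the Baker--Campbell--Hausdorff product at $t=0$. This can be done (and is what the coordinate papers do), but it deserves a real argument, not an ``unwinding''. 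Relatedly, your adaptedness condition on $\EExp$ is stated only implicitly (``precisely the condition that makes the rescaled limits below exist''); the paper makes it explicit as $\delta_t\circ\nabla\circ\delta_t^{-1}=\nabla$, i.e.\ the connection is graded, and you should do likewise.
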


An explicit smooth chart for the groupoid $\THM$ in a neigborhood of the $t=0$ fiber is given by the map $TM\times\RR \to \TTHM$ with
 \begin{equation*}
   (x,v,t) \mapsto 
  \begin{cases}
    (\exp^{\nabla}(\delta_t(v)),x,t), & t\neq 0 \\
   (x,v,0), & t=0.
  \end{cases}
 \end{equation*}
As in Connes' construction, $\exp^\nabla:TM\to M$ is the exponential map associated to  a connection $\nabla$ on $TM$,
while $tv$ is quite naturally replaced by $\delta_t(v)$, where $\delta_t$ are dilations of $TM$ compatible with the filtration (see Section  \ref{sec:dilations}).
What is not easy to see is that for the smooth structure of $\TTHM$ to be well-defined,
 the connection $\nabla$ must be compatible with the dilations, in the sense that
\[ \delta_t\circ \nabla\circ \delta_t^{-1} = \nabla \qquad t\ne 0\]
The  construction of $\TTHM$ in this paper procedes in natural stages.
The technical condition on the connection $\nabla$ will appear naturally at the end of the process.

To prove  Theorem \ref{thm:intro} we follow a classic strategy: just as it is easier to construct a Lie algebra than a Lie group, so it is easier to define a Lie algebroid than a Lie groupoid.  We therefore begin by constructing the Lie algebroid $\ttHM$ of $\TTHM$.  This is  a matter of gluing a filtered bundle to its associated graded bundle---a process which is most efficiently carried out by working with their modules of sections.  We  then appeal to well-known results on integrating Lie algeboids to produce the tangent groupoid $\TTHM$.

Finally, in Section \ref{sec:filtered_groupoids} we describe a generalization of our construction to filtered Lie groupoids $G$, obtaining an analogue of the adiabatic groupoid in the presence of a filtration on the Lie algebroid of $G$.

\subsection{Acknowledgements}
It is a pleasure to thank Claire Debord, Nigel Higson, Jean-Marie Lescure and Georges Skandalis for their input.

\begin{remark}
 Since the appearance of this article as a preprint, another approach to the tangent groupoid of a filtered manifold has appeared in \cite{HigSad} using methods closer to algebraic geometry.
\end{remark}


\section{Lie algebroids}
\label{sec:Lie_algebroids}

For the basics of Lie groupoids and Lie algebroids we refer the reader to \cite{MoeMrc}.  Throughout, we use the following notation: $G$ denotes a Lie groupoid; $M=G^{(0)}$ is the space of units; $r,s:G \to G^{(0)}$ are the range and source maps, respectively; $ \AG = \ker ds|_{G^{(0)}}$ is the Lie algebroid of $G$; and $\anchor = dr: \AG \to TG^{(0)}$ is the anchor.

Abstractly, a \emph{Lie algebroid} over a smooth manifold $M$ is a vector bundle $\lie{g} \to M$ equipped with two compatible structures:
\begin{itemize}
 \item A Lie bracket on smooth sections $[\slot,\slot]:\Gamma^\infty(\lie{g}) \times \Gamma^\infty(\lie{g}) \to \Gamma^\infty(\lie{g})$,
 \item A vector bundle map $\anchor:\lie{g}\to TM$, called the \emph{anchor},
\end{itemize}
such that
\begin{enumerate}
 \item the induced map on sections $\anchor:\Gamma^\infty(\lie{g}) \to \Gamma^\infty(TM)$ is a Lie algebra homomorphism,
 \item for any $X,Y \in \Gamma^\infty(\lie{g})$, $f\in C^\infty(M)$,
 \[
  [X,fY] = f[X,Y] + (\anchor(X)f) Y.
 \]
\end{enumerate}
In nice situations (such as we will have in this paper) one has an analogue of Lie's Third Theorem, which associates to a Lie algebroid $\lie{g}$ a Lie groupoid $G$ with $\AG = \lie{g}$.

With the exception of the final Section \ref{sec:filtered_groupoids}, we will be working  with Lie groupoids and Lie algebroids that are built out of the following two simple examples.

\begin{example}
\label{ex:pair_algebroid}
 The \emph{tangent bundle} $TM \to M$ is a Lie algebroid over $M$, with the usual Lie bracket of vector fields and the identity map as anchor.  This is the Lie algebroid of the pair groupoid $M\times M$.
\end{example}

\begin{example}
\label{ex:bundle_of_Lie_algebras}
 If the anchor of a Lie algebroid $\lie{g}$ is the zero map, then $\lie{g}$ is a \emph{smooth bundle of Lie algebras}.  This is because the Lie bracket $[\slot,\slot]$ on the space of sections $\Gamma^\infty(\lie{g})$ is a $C^\infty(M)$-linear map, and so restricts to a well-defined Lie bracket on each fibre $\lie{g}_x$ ($x\in M$).  
\end{example}

As a particular case of the second example, the tangent bundle $TM$ has an alternative Lie algebroid structure as a bundle of abelian Lie algebras.  In this case, we equip $\Gamma^\infty(TM)$ with the zero Lie bracket and the zero anchor.  

The Lie algebroid $\ttM$ of  Connes' original tangent groupoid is obtained by interpolating smoothly between the two different Lie algebroid structures on $TM$ in the above examples.  This is easily achieved: we simply multiply the Lie algebroid structures on $TM$ by $t\in\RR$ to obtain a smooth family of Lie algebroids indexed by $t$.  Exponentiating this yields the tangent groupoid\footnote{We are ignoring certain technical issues with the exponential here, including simple-connectivity of the fibres, which will be properly addressed in Section \ref{sec:tangent_groupoid}.}.  

The analogue of this construction for filtered manifolds follows the same basic idea, except that (roughly speaking) the rescaling will be by $t^i$ on vector fields of degree $i$.  The goal of the rest of the paper is to make this precise.


\section{The osculating groupoid of a filtered manifold}
\label{sec:osculating}

Let $M$ be a filtered manifold (see Definition \ref{def:filtered_manifold}).  As described in the introduction, the pertinent analogue of the tangent bundle $TM$ is the bundle of osculating nilpotent groups.  
To construct this osculating groupoid we begin with a Lie algebroid, which we will then exponentiate.

The key point in constructing the osculating Lie algebroid is contained in the following simple calculation.  Let $M$ be a filtered manifold, and let $X\in\Gamma^\infty(H^i)$, $Y\in\Gamma^\infty(H^j)$ be vector fields of order $i$ and $j$, respectively.  For any $f,g \in C^\infty(M)$, we have
\begin{align*}
  [fX, gY] &= fg[X,Y] + f(\anchor(X)g)Y - g(\anchor(Y)f)X \\
  &= fg[X,Y] \qquad\mod{\Gamma^\infty(H^{i+j-1})}.
\end{align*}
Thus, although the Lie bracket on vector fields is not $C^\infty(M)$-linear, it does induce a $C^\infty(M)$-linear bracket on sections of the associated graded bundle.  Therefore, the associated graded bundle is a smooth bundle of Lie algebras.  Here are the details.

\begin{definition}
\label{def:osculating_algebroid}
 Let $M$ be a filtered manifold.  The \emph{osculating Lie algebroid} $\tHM$ is the associated graded bundle of $TM$:
 \[
  \tHM  := \bigoplus_{i=1}^N H^{i} / H^{i-1}.
 \]
 The Lie bracket on $\Gamma^\infty(\tHM)$ is that induced from the Lie bracket of vector fields $[\slot,\slot]:\Gamma^\infty(H^i) \times \Gamma^\infty(H^j) \to \Gamma^\infty(H^{i+j})$, and the anchor is the zero map.

 We write $\tHM[i] \defeq H^i/H^{i-1} \subset \tHM$ for the degree $i$ subbundle of $\tHM$, and $\sigma_i : H^i \to \tHM[i]$ for the grading maps.
\end{definition}

Since the anchor is zero, we are in the situation of Example \ref{ex:bundle_of_Lie_algebras}---that is, $\tHM$ is a bundle of graded nilpotent Lie algebras over $M$.

\begin{definition}
  The \emph{osculating groupoid} $\THM$ of a filtered manifold $M$ is the bundle of connected, simply connected nilpotent Lie groups which integrates\footnote{For basic results on integration of nilpotent Lie algebras and their morphisms, we refer the reader to \cite{Knapp:Lie_groups}, particularly \S{}I.16 and Appendix B.  Note that all Lie algebras in this article are finite dimensional.} the Lie algebroid $\tHM$.
\end{definition}

Explicitly, $\THM$ equals $\tHM$ as a smooth fibre bundle, and each fibre is equipped with the group law given by the Baker-Campbell-Hausdorff formula.  Note that in every fibre the Baker-Campbell-Hausdorff formula has finite length which is bounded by the depth of the filtration.  The groupoid multiplication is therefore smooth.

\section{Dilations}
\label{sec:dilations}

Any graded vector bundle admits a canonical one-parameter family $(\delta_\lambda)_{\lambda\in\RR}$ of bundle endomorphisms (automorphisms for $\lambda\neq0$) called the \emph{dilations}, where $\delta_\lambda$ acts on the degree $i$ subspace by multiplication by $\lambda^i$.  These generalize the homotheties of a vector bundle with trivial grading.  

We shall be interested in the osculating Lie algebroid  $\tHM$, where the dilations are defined by
\[
 \delta_\lambda : \xi  \mapsto \lambda^i \xi, \qquad \text{for all }\xi \in \tHM[i] = H^i/H^{i-1}.
\]
In this case, the dilations are Lie algebroid homomorphisms (isomorphisms if $\lambda\ne 0$). They integrate to Lie groupoid homomorphisms of the osculating groupoid $\THM$.

For more details on these dilations, see for instance \cite{HelNou, Melin:Lie_filtrations, ChrGelGloPol}.

\section{The tangent Lie algebroid I: Module of sections}

Algebraically, the tangent groupoid of a filtered manifold will be a disjoint union
\[
  \TTHM = (M\times M)\times\RR^\times \;\sqcup\; \THM \times \{0\},
\]
seen as a family of groupoids parametrized by $t\in\RR = \RR^\times \sqcup \{0\}$.  
The difficulty is to define the smooth manifold structure of this groupoid. 
This is the key point of the construction, and we will approach it in stages.

In what follows, $TM \times \RR$ is viewed as a vector bundle over $M\times \RR$ with the obvious projection---\emph{i.e.}, it is a constant family of bundles $TM\times\{t\}$ indexed by $t\in\RR$.  As a general point of notation, if $\XX$ is a section of some bundle over $M\times\RR$, we will write $\XX|_t$ for its restriction over $M\times\{t\}$.  

\begin{definition}
\label{def:module_of_sections}
  Let $\XH$ denote the $C^\infty(M\times\RR)$-submodule of $\Gamma^\infty(TM \times \RR)$ consisting of sections which vanish at $t=0$ to various orders as follows:
\begin{equation}
 \label{eq:module_of_sections}
\XH \defeq \{ \XX\in \Gamma^\infty(TM\times\RR) \st 
    \partial_t^k \XX|_{t=0}\in \Gamma^\infty(H^k) \text{ for all $k\geq 0$}\}
\end{equation}
where $\partial_t = \frac{\partial}{\partial t}$.
\end{definition}

In Section \ref{sec:II}, we will show that $\XH$ is in fact the space of smooth sections of a vector bundle $\ttHM$ over $M\times\RR$.
It turns out to be easier to define the module of sections $\XH=\Gamma^\infty(\ttHM)$ than it is to define the vector bundle $\ttHM$ itself. 
 In Section \ref{sec:III} we will define a Lie algebroid structure on $\ttHM$.  In Section \ref{sec:tangent_groupoid} we integrate this to obtain the smooth structure on $\TTHM$.

\section{The tangent Lie algebroid II: Vector bundle}
\label{sec:II}

As a vector bundle, the osculating Lie algebroid $\tHM$ of a filtered manifold is isomorphic, but not canonically isomorphic  to the tangent bundle $TM$.  To {show that $\XH$ is the module of sections of a vector bundle, we will need to transfer the dilations $\delta_\lambda$ from $\tHM$ to $TM$.  For this, we need a splitting.

\begin{definition}
 \label{def:splitting}
 A {\em splitting} of $\tHM$ will mean an isomorphism of vector bundles $\psi:\tHM \to TM$ such that for each $i$, the restriction of $\psi$ to $\tHM[i]$ is right inverse to the grading map $\sigma_i:H^i \to \tHM[i]$.
\end{definition}

\begin{proposition}
 \label{prop:bundle}
 Let $M$ be a filtered manifold and let $\XH$ be the $C^\infty(M\times\RR)$-module of Definition \ref{def:module_of_sections}.  For any splitting $\psi$, the map 
 \begin{align*}
  {\Phi^\psi}: \Gamma^\infty(\tHM\times\RR)& \to \XH && \subset \Gamma^\infty(TM\times\RR)\\
  ({\Phi^\psi}\XX)(x,t) &\defeq \psi(\delta_t(\XX(x,t)))
 \end{align*}
 is an isomorphism of $C^\infty(M\times\RR)$-modules.  

 Moreover, given any other splitting $\varphi$, the composition $(\Phi^\varphi)^{-1} \circ \Phi^\psi : \Gamma^\infty(\tHM\times\RR) \to \Gamma^\infty(\tHM\times\RR)$ is given by a smooth bundle automorphism of $\tHM \times \RR$ that is the identity at $t=0$.
\end{proposition}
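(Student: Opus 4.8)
The plan is to verify the isomorphism claim in two halves. First I would check that $\Phi^\psi$ lands in $\XH$ and is a $C^\infty(M\times\RR)$-module map. Module-linearity is immediate since $\psi$ and each $\delta_t$ are fibrewise linear and $\delta_t$ depends only on $t$, so $\Phi^\psi(f\XX) = f\,\Phi^\psi(\XX)$ for $f\in C^\infty(M\times\RR)$. For the containment in $\XH$, fix $\XX \in \Gamma^\infty(\tHM\times\RR)$ and decompose $\XX = \sum_i \XX_i$ with $\XX_i$ a section of $\tHM[i]\times\RR$. Then $(\Phi^\psi\XX)(x,t) = \sum_i t^i\,\psi(\XX_i(x,t))$, so $\partial_t^k(\Phi^\psi\XX)|_{t=0}$ is a sum over $i\le k$ of terms of the form (binomial coefficient) $\times\,\psi\big(\partial_t^{k-i}\XX_i|_{t=0}\big)$, which lies in $\psi(\tHM[i]) \subseteq H^i \subseteq H^k$ by the defining property of a splitting (the image of $\psi|_{\tHM[i]}$ is a complement to $H^{i-1}$ inside $H^i$). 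Hence $\partial_t^k(\Phi^\psi\XX)|_{t=0} \in \Gamma^\infty(H^k)$, so $\Phi^\psi\XX\in\XH$.

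Next I would construct the inverse, which is where the main work lies. Given $\YY\in\XH$ I want to recover $\XX$ with $\psi(\delta_t\XX(x,t)) = \YY(x,t)$; formally $\XX(x,t) = \delta_t^{-1}\psi^{-1}(\YY(x,t))$ for $t\ne 0$, and the content is that this extends smoothly across $t=0$. Writing $\psi^{-1}\YY = \sum_i \ZZ_i$ with $\ZZ_i$ valued in $\tHM[i]\times\RR$, the candidate is $\XX = \sum_i t^{-i}\ZZ_i$, so I must show $t^{-i}\ZZ_i$ extends to a smooth section. The hypothesis $\YY\in\XH$ gives $\partial_t^k\YY|_{t=0}\in\Gamma^\infty(H^k)$; applying the fixed bundle isomorphism $\psi^{-1}$ and then projecting to the degree-$i$ component $\tHM[i]$ (which is the grading map $\sigma_i$ composed with... more precisely, $\psi^{-1}$ followed by projection onto $\tHM[i]$), I get $\partial_t^k\ZZ_i|_{t=0} \in \sigma_i(H^k)$, and $\sigma_i(H^k) = 0$ whenever $k < i$ since $H^k \subseteq H^{i-1} = \ker\sigma_i$ for $k\le i-1$. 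Thus $\ZZ_i$ vanishes to order $\ge i$ at $t=0$, and by Hadamard's lemma (Taylor's theorem with integral remainder) $t^{-i}\ZZ_i$ is a smooth section. This produces a two-sided inverse $\Psi^\psi(\YY)\defeq\sum_i t^{-i}(\psi^{-1}\YY)_i$, which is manifestly $C^\infty(M\times\RR)$-linear, and one checks $\Phi^\psi\circ\Psi^\psi = \id$, $\Psi^\psi\circ\Phi^\psi=\id$ directly from the decompositions. I expect the only subtlety here to be bookkeeping: being careful that the projection onto $\tHM[i]$ of $\psi^{-1}$ applied to a section of $H^k$ really does land in $\sigma_i(H^k)$, which follows because $\psi^{-1}$ restricted to $H^k$ respects the filtration-versus-grading picture built into Definition \ref{def:splitting}.

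For the final clause, let $\varphi$ be another splitting and set $A \defeq (\Phi^\varphi)^{-1}\circ\Phi^\psi$, a $C^\infty(M\times\RR)$-module automorphism of $\Gamma^\infty(\tHM\times\RR)$, hence induced by a smooth bundle automorphism of $\tHM\times\RR$ which I also call $A$. Explicitly $A = \delta_t^{-1}\circ(\varphi^{-1}\psi)\circ\delta_t$ for $t\ne 0$, and since $\varphi^{-1}\psi$ is a filtered-preserving automorphism of $\tHM$ whose induced map on the associated graded is the identity (both $\psi$ and $\varphi$ split the same grading maps), the matrix of $\varphi^{-1}\psi$ with respect to the grading $\tHM = \bigoplus_i\tHM[i]$ is block upper-triangular with identity blocks on the diagonal and an off-diagonal $(i,j)$-block $B_{ij}$ only for $i < j$ (lower degree receiving contributions from higher degree, in the convention where $\psi(\tHM[j])\subseteq H^j$ and the grading map $\sigma_i$ kills $H^{i-1}$). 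Conjugating by $\delta_t$ multiplies the $(i,j)$-block by $t^{i-j}\cdot$... rather $t^i B_{ij} t^{-j} = t^{i-j}B_{ij}$, which for $i<j$ is a negative power of $t$ — so I must double-check the direction of the triangularity, and I expect the correct statement is that $\varphi^{-1}\psi - \id$ maps $\tHM[j]$ into $\bigoplus_{i>j}\tHM[i]$, giving $t^{i-j}$ with $i>j$, hence a positive power of $t$ and smooth extension to $t=0$ with value $\id$. This is the one place where the nilpotent filtration conventions must be pinned down precisely, and it is the main obstacle; once the direction is fixed, smoothness and the $t=0$ value are immediate. I would close by noting that $A$ smooth and $A|_{t=0}=\id$ together with $A$ invertible for $t\ne 0$ (being a composition of isomorphisms) and $A|_{t=0}=\id$ invertible forces $A$ to be a genuine bundle automorphism of $\tHM\times\RR$.
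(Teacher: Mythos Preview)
Your approach is essentially the paper's: decompose into graded components, show the image lands in $\XH$, build the inverse by dividing each graded piece by the appropriate power of $t$, then analyse $\delta_t^{-1}\circ(\varphi^{-1}\psi)\circ\delta_t$ block by block. The first two parts are fine and match the paper almost verbatim.

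The genuine slip is in the last clause, and it is exactly the place you flagged as uncertain. Your \emph{first} triangularity claim is the correct one: since $\psi(\tHM[j])\subseteq H^j$ and $\varphi^{-1}(H^j)=\bigoplus_{i\le j}\tHM[i]$, the map $\varphi^{-1}\psi-\id$ sends $\tHM[j]$ into $\bigoplus_{i<j}\tHM[i]$ (lower degrees, not higher). What went wrong is the conjugation exponent: for the $(i,j)$-block one gets
\[
\bigl(\delta_t^{-1}\,B_{ij}\,\delta_t\bigr)\big|_{\tHM[j]} \;=\; t^{-i}\,B_{ij}\,t^{\,j} \;=\; t^{\,j-i}B_{ij},
\]
not $t^{\,i-j}B_{ij}$; you conjugated in the wrong order. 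With $i<j$ this is already a positive power of $t$, so your original triangularity plus the correct exponent immediately gives the smooth extension to $\id$ at $t=0$, exactly as in the paper. Your proposed ``fix''---that $\varphi^{-1}\psi-\id$ maps $\tHM[j]$ into $\bigoplus_{i>j}\tHM[i]$---is false and would not be provable; if you follow that route the argument collapses. So keep your first triangularity statement, correct the exponent to $t^{\,j-i}$, and the proof goes through.
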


\begin{proof}
Let $\YY\in\Gamma^\infty(\tHM\times\RR)$.  We decompose it into graded components as $\YY = \sum_{i=1}^N \YY_i$ with $\YY_i\in\Gamma^\infty(\tHM[i]\times\RR)$.   Then
 \[
  \partial_t^k (\Phi^\psi \YY)|_{t=0} 
   = \sum_{i=1}^N \partial_t^k (t^i \psi(\YY_i)) |_{t=0}.
 \] 
The summands on the right are zero for all $i>k$, so the sum belongs to $H^k$.  This proves that the image of $\Phi^\psi$ lies in the module $\XH$.  

The map $\Phi^\psi$ is obviously $C^\infty(M\times\RR)$-linear, and since $\psi\circ \delta_t$ is invertible for all $t\neq 0$, ${\Phi^\psi}$ is injective.  
We need to show it is surjective.

Let $\XX\in \XH \subset \Gamma^\infty(TM\times\RR)$.  Since the splitting is an isomorphism, we can write $\XX = \sum_{i=1}^N \psi(\XX_i)$ for some $\XX_i\in \Gamma^\infty(\tHM[i]\times\RR)$.  
By the definition of $\XH$, $\partial_t^k \XX|_{t=0} \in \Gamma^\infty(H^k)$ for all $k$, so we must have $\partial_t^k \XX_i|_{t=0} = 0$ for all $k<i$,
and therefore $\XX_i(x,t) = t^i\YY_i(x,t)$ for some $\YY_i\in\Gamma^\infty(\tHM[i] \times \RR)$.  
Then $\XX=\Phi^\psi (\YY)$, where $\YY =  \sum_{i=1}^N \YY_i$.  This proves surjectivity.

Now let $\varphi$ be a second splitting.  At any $(x,t)\in M\times\RR$ with $t\neq 0$ we have
 \[
   (\Phi^\varphi)^{-1}\circ \Phi^\psi (\YY )(x,t) = \delta_t^{-1}\circ\varphi^{-1}\circ\psi\circ\delta_t (\YY(x,t)),
 \]
 which is obviously induced by the bundle automorphism $\delta_t^{-1}\circ\varphi^{-1}\circ\psi\circ\delta_t$ on $\tHM\times\RR^\times$.  We claim that it extends to a smooth bundle automorphism also at $t=0$.
 
 Since $\psi$ and $\varphi$ are both splittings, $\varphi^{-1}\circ\psi$ preserves the top degree part of any vector in $\tHM$.  That is, if we fix a degree $k$, then $\varphi^{-1}\circ\psi$ maps $\tHM[k]$ into $\bigoplus_{j\leq k} \tHM[j]$, and it decomposes as
 \[
  \varphi^{-1}\circ\psi|_{\tHM[k]} = \id + \epsilon_{k-1} + \cdots + \epsilon_1
 \]
 for some linear maps $\epsilon_j:\tHM[k] \to \tHM[j]$ with $j<k$.  Therefore,
\[
 (\delta_t^{-1} \circ \varphi^{-1} \circ \psi \circ \delta_t)|_{\tHM[k]} = \id + \textstyle\sum_{j<k}t^{k-j}\epsilon_j.
\]
  This extends smoothly to $\id$ at $t=0$.  This completes the proof.
\end{proof}

Proposition  \ref{prop:bundle} shows that $\XH$ is isomorphic to the module of sections of a vector bundle that is isomorphic to $\tHM\times\RR$ (or $TM\times\RR$).
The isomorphism $\Phi^\psi$ is not canonical, but depends upon the choice of splitting.  
To proceed we need an alternative and {\em canonical} description of the vector bundle determined by the module $\XH$.

First, recalling the definition of $\XH\subset \Gamma^\infty(TM\times \RR)$, it is clear that for every $t\neq0$ there is a canonical restriction map
\begin{equation}
 \label{eq:evt}
 \ev_t : \XH \to \Gamma^\infty(TM); \qquad \XX\mapsto\XX|_t.
\end{equation}
In other words, the vector bundle determined by $\XH$, when restricted to $M\times \RR^\times$,
is {\em canonically} isomorphic to $TM\times \RR^\times$.

On the other hand, Proposition  \ref{prop:bundle} shows that the isomorphism $\Phi^\psi$ is independent of the choice of splitting $\psi$ at $t=0$.
Therefore, when restricted to $M\times \{0\}$,
the vector bundle determined by $\XH$ is {\em canonically} isomorphic to $\tHM$.
A splitting is used to construct the  isomorphism, but the isomorphism is independent of the splitting.

To make the isomorphism at $t=0$ more explicit, one can  define an  "evaluation at zero" map
\begin{equation}
\label{eq:ev0}
 \ev^H_0 : \XH \to \Gamma^\infty(\tHM) = \bigoplus_{i=1}^N \Gamma^\infty(H^i/H^{i-1}); \qquad 
\end{equation}
with components
\begin{align*}
\label{eq:ev0i}
 \ev_0^{(i)} : \XH &\to \Gamma^\infty(H^i/H^{i-1}) \\
 \XX &\mapsto  \sigma_i \left( \textfrac{1}{i!}\,\partial_t^i \XX|_{t=0}\right). \nonumber
\end{align*}
We leave it to the reader to check that if $\XX\in \XH$
with $\XX=\Phi^\psi(\YY)$ for 
$\YY\in\Gamma^\infty(\tHM\times\RR)$, then $\ev^H_0(\XX) = \YY|_0$. 
In other words, the evaluation map $\ev^H_0$ agrees with the restriction to $t=0$ derived from Proposition  \ref{prop:bundle}.

We now identify the vector bundle determined by the $C^\infty(M\times \RR)$-module $\XH$ as the disjoint union
\[
  \ttHM := TM\times\RR^\times \;\sqcup\; \tHM\times\{0\}
\]
The smooth structure of the total space of $\ttHM$ is implicit in the canonical identification
$\XH\cong \Gamma^\infty(\ttHM)$ (from the evaluation maps $\ev_t$ and $\ev_0^H$) and the isomorphism $\Phi^\psi$ of Proposition \ref{prop:bundle}.

To make the smooth structure explicit, any choice  of  splitting $\psi:\tHM\to TM$
gives a smooth bundle isomorphism
\begin{align}
\label{eq:bundle_iso}
 \Phi^\psi \colon \tHM\times\RR &\to \ttHM \\
 (x,v,t) &\mapsto  (x,\psi\circ\delta_t(v),t), &&t\neq0 \nonumber \\
 (x,v,0) & \mapsto (x,v,0), && t=0. \nonumber
\end{align}
Here, and in what follows, we are reappropriating the notation $\Phi^\psi$ for the vector bundle isomorphism \eqref{eq:bundle_iso}.  
This is justified by the fact that $\XH$ is  {\em canonically} isomorphic with  $\Gamma^\infty(\ttHM)$.
The map $\Phi^\psi$ in Proposition \ref{prop:bundle} is then the isomorphism $\Gamma^\infty(\tHM\times \RR) \to \Gamma^\infty(\ttHM)$  induced by  \eqref{eq:bundle_iso}.

\begin{example}
 Let $X\in\Gamma^\infty(H^m)$ be a vector field on $M$ of order $m$.  Define $\XX\in\Gamma^\infty(TM\times\RR)$ by
 \[
  \XX_t := t^m X, \qquad (t\in\RR).
 \]
 One easily confirms that $\XX\in\XH$ (see Definition \ref{def:module_of_sections}), so that $\XX$ defines a smooth section of the Lie algebroid $\ttHM$.  Its restriction to each $\ttHM|_t$ is 
 \begin{equation*}
 \begin{array}{lll}
  \ev_t(\XX) = t^mX  &\in \Gamma^\infty(TM), \hspace{2cm} & t\neq 0, \\
  \ev^H_0(\XX) = \sigma_m(X) &\in \Gamma^\infty(\tHM), & t=0.
 \end{array}
 \end{equation*}
 This example illustrates the gluing of a vector field $X$ (interpreted as a differential operator on $M$) to its principal symbol in a generalized Heisenberg calculus, via the tangent Lie algebroid $\ttHM$. This  can be easily generalized to arbitrary differential operators---see \cite[Section 11]{VanYun:PsiDOs}.
\end{example}

\section{The tangent Lie algebroid III: Lie algebroid structure}
\label{sec:III}

The next task is to understand the Lie algebroid structure on $\ttHM$.
To begin with, consider $TM \times \RR$, which is the Lie algebroid of a family of pair groupoids $M\times M \times\RR$ indexed by $t\in\RR$.  As such, it is equipped with the following Lie algebroid operations: for $\XX, \YY \in \Gamma^\infty(TM\times\RR)$,
\begin{align}
 [\XX,\YY](x,t) &= [\XX_t,\YY_t](x),   & (x,t)\in M\times\RR,  \label{eq:TMxR_bracket}\\
 \anchor(\XX) &= \XX, \label{eq:TMxR_anchor}
\end{align}
where the bracket on the right-hand side of \eqref{eq:TMxR_bracket} is the usual bracket of vector fields on $M$.

\begin{proposition}\label{prop:7a}
  The operations \eqref{eq:TMxR_bracket} and \eqref{eq:TMxR_anchor} on $\Gamma^\infty(TM\times\RR)$ restrict to the submodule $\XH$ of Definition \ref{def:module_of_sections}.
\end{proposition}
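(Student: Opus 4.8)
The plan is to verify the two operations one at a time; only the bracket requires an argument, and the anchor is essentially trivial.

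\emph{The anchor.} By \eqref{eq:TMxR_anchor} one has $\anchor(\XX) = \XX$, so $\anchor$ maps $\XH$ into $\XH$ with nothing to prove. (It is worth observing, though it is not needed here, that the $k=0$ clause of \eqref{eq:module_of_sections} forces $\XX|_{t=0}\in\Gamma^\infty(H^0)=0$ for every $\XX\in\XH$, so the anchor in fact vanishes along $M\times\{0\}$, consistent with the osculating Lie algebroid $\tHM$ carrying the zero anchor.)

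\emph{The bracket.} Fix $\XX,\YY\in\XH$. I must show $\partial_t^k[\XX,\YY]|_{t=0}\in\Gamma^\infty(H^k)$ for all $k\geq 0$, where by \eqref{eq:TMxR_bracket} $[\XX,\YY]|_t = [\XX|_t,\YY|_t]$ is the ordinary Lie bracket of the $t$-dependent vector fields $\XX|_t,\YY|_t$ on $M$. The first step is to record a Leibniz identity
\[ \partial_t^k[\XX|_t,\YY|_t] \;=\; \sum_{j=0}^{k}\binom{k}{j}\bigl[\,\partial_t^j\XX|_t,\ \partial_t^{k-j}\YY|_t\,\bigr]. \]
This follows, for instance, by induction from the case $k=1$, which holds because $[\slot,\slot]$ is an $\RR$-bilinear bidifferential operation whose structure does not involve $t$: in a coordinate chart on $M$, writing $\XX|_t=\sum_i a^i(x,t)\partial_{x^i}$ and $\YY|_t=\sum_j b^j(x,t)\partial_{x^j}$, the bracket equals $\sum_{i,j}(a^i\partial_{x^i}b^j - b^i\partial_{x^i}a^j)\partial_{x^j}$, and $\partial_t$ commutes with each $\partial_{x^i}$ and satisfies the product rule; the identity is then global since its left-hand side is chart-independent. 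The second step is to evaluate at $t=0$ and apply the filtration: by Definition \ref{def:module_of_sections} we have $\partial_t^j\XX|_{t=0}\in\Gamma^\infty(H^j)$ and $\partial_t^{k-j}\YY|_{t=0}\in\Gamma^\infty(H^{k-j})$, and by Definition \ref{def:filtered_manifold} $[\Gamma^\infty(H^j),\Gamma^\infty(H^{k-j})]\subseteq\Gamma^\infty(H^k)$, so every summand lies in $\Gamma^\infty(H^k)$ and hence so does the sum. This shows $[\XX,\YY]\in\XH$.

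The only point requiring genuine care is the Leibniz identity, and that is a routine verification once one notes that the bracket is bilinear with $t$-independent coefficients; after that the Lie-algebra-filtration axiom does all the work. Indeed this is exactly the mechanism foreshadowed by the computation at the start of Section \ref{sec:osculating}, now organized by order of vanishing in $t$ rather than by the grading.
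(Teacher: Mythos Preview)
Your proof is correct and follows exactly the same approach as the paper's: both dispatch the anchor as trivial and handle the bracket via the Leibniz expansion $\partial_t^k[\XX,\YY]|_{t=0} = \sum_j \binom{k}{j}[\partial_t^j\XX|_{t=0},\partial_t^{k-j}\YY|_{t=0}]$ followed by the filtration condition $[\Gamma^\infty(H^j),\Gamma^\infty(H^{k-j})]\subseteq\Gamma^\infty(H^k)$. You are simply more explicit than the paper in justifying the Leibniz identity and in recording the binomial coefficients (which the paper omits, harmlessly, since they are scalars).
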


\begin{proof}
  The anchor clearly restricts to $\XH$.  
  If $\XX,\YY\in\XH$, we get
  \[
    \partial_t^k [\XX,\YY]|_{t=0}
     = \sum_{l=0}^k \left[\partial_t^l \XX|_{t=0} \; , \;\partial_t^{k-l} \XX|_{t=0}\right].
  \]
  The summand on the right lies in $[\Gamma^\infty(H^l),\Gamma^\infty(H^{k-l})] \subseteq \Gamma^\infty(H^k)$, so $[\XX,\YY]\in\XH$.
  These operations clearly satisfy the Lie algebroid axioms, since they are the restriction of the Lie algebroid operations on $\Gamma^\infty(TM\times\RR)$.
\end{proof}

Via the canonical identification $\XH\cong \Gamma^\infty(\ttHM)$ the bracket and anchor of $\XH\subset \Gamma^\infty(TM\times \RR)$ implicitly determine a Lie algebroid structure for the vector bundle $\ttHM$. 
The next proposition explicitly identifies this Lie algebroid structure.

\begin{proposition}\label{prop:7b}
 The maps $\ev_t : \XH\to \Gamma^\infty(TM)$ and  $\ev^H_0 : \XH\to \Gamma^\infty(\tHM)$ of Equations \eqref{eq:evt} and \eqref{eq:ev0} are compatible with the Lie algebroid operations. 
\end{proposition}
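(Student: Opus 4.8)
The plan is to check the two compatibilities---preservation of the anchor and preservation of the bracket---separately, and in each case to treat the slices $t\neq0$ and $t=0$ in turn. Three of the four resulting verifications are immediate; the only genuine computation is the bracket identity at $t=0$.

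First I would dispose of the easy cases. On $\XH$ the anchor is $\anchor(\XX)=\XX$, viewed as a vertical vector field on $M\times\RR$ (equation \eqref{eq:TMxR_anchor}), and the bracket is $[\XX,\YY]|_t=[\XX|_t,\YY|_t]$ (equation \eqref{eq:TMxR_bracket}). For $t\neq0$ the slice $M\times\{t\}$ is identified with $M$ by $\ev_t$; the restriction of $\anchor(\XX)$ to it is $\XX|_t=\ev_t(\XX)$, which is $\anchor_{TM}(\ev_t\XX)$ since the pair algebroid $TM$ has identity anchor (Example \ref{ex:pair_algebroid}), and $\ev_t([\XX,\YY])=[\XX|_t,\YY|_t]=[\ev_t\XX,\ev_t\YY]$ directly from \eqref{eq:TMxR_bracket}. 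The anchor compatibility at $t=0$ is equally short: by the defining property of $\XH$ with $k=0$, $\XX|_{t=0}\in\Gamma^\infty(H^0)=0$ for every $\XX\in\XH$, so the anchor of $\ttHM$ vanishes identically over the slice $M\times\{0\}$---which is exactly the zero anchor of $\tHM$ (Definition \ref{def:osculating_algebroid}).

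The one substantive point is that $\ev^H_0$ takes the bracket of $\XH$ to the bracket of $\tHM$. I would verify this componentwise in the grading. Fix $\XX,\YY\in\XH$ and a degree $k\in\{1,\dots,N\}$, and start from the Leibniz expansion already used in the proof of Proposition \ref{prop:7a},
\[
 \partial_t^k[\XX,\YY]|_{t=0}=\sum_{l=0}^{k}\binom{k}{l}\,\bigl[\partial_t^l\XX|_{t=0},\;\partial_t^{k-l}\YY|_{t=0}\bigr].
\]
Applying $\sigma_k$ and dividing by $k!$, and using $\tfrac{1}{k!}\binom{k}{l}=\tfrac{1}{l!\,(k-l)!}$, the degree-$k$ component of $\ev^H_0([\XX,\YY])$ is
\[
 \ev_0^{(k)}([\XX,\YY])=\sum_{l=0}^{k}\tfrac{1}{l!\,(k-l)!}\,\sigma_k\bigl[\partial_t^l\XX|_{t=0},\;\partial_t^{k-l}\YY|_{t=0}\bigr].
\]
On the other side, the bracket of $\tHM$ is $C^\infty(M)$-bilinear and graded, so the degree-$k$ component of $[\ev^H_0\XX,\ev^H_0\YY]_{\tHM}$ is $\sum_{i+j=k}[\ev_0^{(i)}\XX,\ev_0^{(j)}\YY]_{\tHM}$. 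Writing $\ev_0^{(i)}\XX=\tfrac{1}{i!}\sigma_i(\partial_t^i\XX|_{t=0})$ and invoking the definition of the osculating bracket, $[\sigma_i X,\sigma_j Y]_{\tHM}=\sigma_{i+j}([X,Y])$ for $X\in\Gamma^\infty(H^i)$ and $Y\in\Gamma^\infty(H^j)$ (Definition \ref{def:osculating_algebroid}), each summand becomes $\tfrac{1}{i!\,j!}\,\sigma_k\bigl[\partial_t^i\XX|_{t=0},\;\partial_t^j\YY|_{t=0}\bigr]$, and reindexing by $l=i$, $k-l=j$ shows the two sums coincide. (The $l=0$ and $l=k$ terms vanish on both sides since $\XX|_{t=0}=\YY|_{t=0}=0$, so only $i,j\geq1$ actually contribute.)

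I do not expect a real obstacle: the one thing that needs care is the grading bookkeeping---tracking which $H^\bullet$ each Taylor coefficient $\partial_t^l\XX|_{t=0}$ lives in, so that each application of $\sigma_k$ to a bracket is legitimate (here the fact, used in Proposition \ref{prop:7a}, that $[\Gamma^\infty(H^l),\Gamma^\infty(H^{k-l})]\subseteq\Gamma^\infty(H^k)$ is exactly what makes this work), and matching the combinatorial factors. I would also point out that the Lie algebroid axioms themselves---the Jacobi identity and $[X,fY]=f[X,Y]+(\anchor(X)f)\,Y$---need not be reverified for any of the three algebroids, since they hold on $\XH$ by Proposition \ref{prop:7a} and are part of the structures of $TM$ and $\tHM$. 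Finally, one could instead transport everything through a splitting and prove the statement on $\tHM\times\RR$, where $\ev^H_0$ becomes restriction to $t=0$ by Proposition \ref{prop:bundle}; I would keep the direct Taylor-coefficient argument, which seems more transparent.
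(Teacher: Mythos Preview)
Your proposal is correct and follows essentially the same route as the paper: the $t\neq0$ case is immediate, and at $t=0$ both arguments expand $\partial_t^k[\XX,\YY]|_{t=0}$ by Leibniz, split the binomial as $\tfrac{1}{k!}\binom{k}{l}=\tfrac{1}{l!(k-l)!}$, and invoke $\sigma_k([X,Y])=[\sigma_l X,\sigma_{k-l}Y]_{\tHM}$ to match with $[\ev^H_0\XX,\ev^H_0\YY]$. You are slightly more explicit than the paper in separating out the anchor check and in noting that the $l=0,k$ terms vanish, but the substance is the same.
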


\begin{proof}
 Away from $t=0$ we have $\Gamma^\infty(\ttHM|_{\RR^\times}) \cong \Gamma^\infty(TM\times\RR^\times)$, so  the result for $\ev_t$ is immediate.

 At $t=0$, we calculate, for $\XX,\YY\in\XH$, 
\begin{align*}
 \ev^H_0 [\XX,\YY] &= \bigoplus_{i=1}^N \sigma_i \left( \textfrac{1}{i!} \,\partial_t^i [\XX,\YY]|_{t=0}\right) 
  = \bigoplus_{i=1}^N \sigma_i\left(\frac{1}{i!}  \sum_{j=0}^i  \binom{i}{j}[\partial_t^j \XX, \partial_t^{i-j}\YY] \right) \\
  &= \bigoplus_{i=1}^N \sum_{j=0}^i \left[\sigma_j \left(\textfrac{1}{j!}\,\partial_t^j \XX\right), \sigma_{i-j}\left(\textfrac{1}{(i-j)!}\,\partial_t^{i-j}\YY \right)\right]  
  = [\ev_0^H\XX, \ev_0^H\YY].
 \end{align*}
 The result follows.
\end{proof}

In summary, Propositions \ref{prop:7a} and \ref{prop:7b} show that the disjoint union
\begin{equation}
\label{eq:ttHM}
 \ttHM = TM\times\RR^\times \;\sqcup\; \tHM\times\{0\}
\end{equation}
is a Lie algebroid, if this union is given the smooth vector bundle structure from the previous section, as in equation \eqref{eq:bundle_iso}.


\section{The tangent groupoid}
\label{sec:tangent_groupoid}

Finally, we confirm that the tangent groupoid 
\begin{equation}
 \label{eq:TTHM}
 \TTHM = (M\times M)\times\RR^\times \;\sqcup\; \THM\times\{0\}
\end{equation}
admits a smooth structure with $\ttHM$ as its Lie algebroid.  This follows more or less directly from a result of Nistor  \cite{Nistor:integration}\footnote{Note the correction added in \cite{BenNis}.} which, roughly speaking, allows one to smoothly glue Lie groupoids together if one has a smooth glueing of their Lie algebroids.
The existence of the "integrated" Lie groupoid $\TTHM$ with Lie algebroid $\ttHM$ depends on a mild technical condition discussed in \cite{BenNis},
and in this section we verify that that condition is satisfied by $\ttHM$.

The  Lie algebroid $\ttHM$ is the disjoint union of  $\tHM\times\{0\}$ and $TM\times\RR^\times$.  Each of these components is the Lie algebra of a Lie groupoid, namely the osculating groupoid $\THM$ and the family of pair groupoids $(M\times M) \times\RR^\times$, respectively.  
As observed in \cite[Theorem 10]{BenNis}, the  condition for gluing these Lie groupoids is the local injectivity of the exponential map on $\ttHM$.  We therefore need to study the exponential maps of $\tHM$ and $M\times M$.

Defining an exponential map for a groupoid requires the choice of a leafwise connection on the Lie algebra.  We refer \cite{Nistor:integration} for the details.  We merely point out that in our case, it suffices to have a smooth family of connections on the Lie algebroids $\ttHM|_t$ for every $t\in\RR$.  

Fix a connection $\nabla$ on $\tHM$.  We will demand that $\nabla$ be graded, meaning that it is a direct sum of connections on the subbundles $\tHM[i] = H^i/H^{i-1}$.   
Equivalently, we require that
\[ \delta_t \circ \nabla \circ \delta_t^{-1} = \nabla\]
We also choose a splitting $\psi:\tHM\to TM$  as in Definition \ref{def:splitting}.  Then we can use the bundle isomorphism $\Phi^\psi:\tHM\times\RR \cong \ttHM$ of Equation \eqref{eq:bundle_iso} to push the connection $\nabla$ forward to a connection $\nabla^t$ on each of the Lie algebroids $\ttHM|_t$ with $t\in\RR$.  Specifically, because of the way we have chosen $\nabla$, we obtain $\nabla^0=\nabla$, while for each $t\neq0$, 
\begin{equation*}
 \nabla^t = (\psi\circ\delta_t) \circ \nabla \circ (\delta_t^{-1}\circ\psi^{-1}) = \psi \circ \nabla \circ \psi^{-1}.
\end{equation*}

The family of connections $\bbnabla \defeq (\nabla^t)_{t\in\RR}$ is then a smooth family of connections on the Lie algebroids $\ttHM|_t$.  As such, it induces a groupoid exponential $\Exp^{\bbnabla}$.  
In order to obtain a global smooth structure on $\TTHM$ we must verify  that  $\Exp^{\bbnabla}$ is locally injective in a neighborhood of the zero section.

Recall that the connection $\nabla^\psi \defeq \psi\circ\nabla\circ\psi^{-1}$ on $TM$ induces a groupoid exponential for the pair groupoid:
\begin{align}
\label{eq:pair_exponential}
 \Exp^{\nabla^\psi} : TM  & \to M \times M \\
   (x,v) &\mapsto (\exp^{\nabla^\psi}_x(v), x), \nonumber
\end{align}
where $\exp^{\nabla^\psi}$ is the usual geometric exponential.
Strictly speaking, $\Exp^{\nabla^\psi}$ may be only defined on some open neighbourhood of the zero section in $TM$.  The next definition deals with this eventuality.  Note that it will be convenient to pull back everything to the associated graded bundle $\tHM$ via the splitting $\psi:\tHM \to TM$.

\begin{definition}
 \label{def:domain_of_injectivity}
An open neighbourhood of the zero section $U \subseteq \tHM$ will be called a {\em domain of injectivity} (with respect to the connection $\nabla$ and splitting $\psi$) if the map  $\Exp^{\nabla^\psi}\!\circ\,\psi:  U \to M\times M$ is well-defined and injective.  
\end{definition}

We can now describe the exponential map of the full tangent groupoid $\TTHM$.  The explicit formula is most conveniently expressed by using the bundle isomorphism $\Phi^\psi : \tHM\times\RR \cong \ttHM$ of Equation \eqref{eq:bundle_iso} to pull back from the Lie algebroid $\ttHM$ to $\tHM\times\RR$.

\begin{lemma}
\label{lem:EExp}
 Let $U\subseteq\tHM$ be a domain of injectivity as above.  The composition $\Exp^{\bbnabla} \!\circ \,\Phi^\psi: \tHM\times\RR \to \TTHM$ is well-defined and injective on the subset
 \begin{equation}
 \label{eq:exponential domain}
  \UU\defeq \{ (x,v,t) \in \tHM\times\RR \st (x,\psi(\delta_tv)) \in U \} \subseteq \tHM\times\RR,
 \end{equation}
 with image
 \begin{equation}
 \label{eq:exponential_patch}
 (\THM\times\{0\}) \sqcup (\Exp^{\nabla^\psi}(U) \times \RR^\times) \subset \TTHM .
\end{equation}
 It is given explicitly by the formula
 \begin{equation}
 \label{eq:EExp}
  \Exp^{\bbnabla}\!\circ\,\Phi^\psi : (x,v,t) \mapsto 
  \begin{cases}
    (\Exp^{\nabla^\psi}(\psi\circ\delta_t(v)),t), & t\neq 0 \\
   (x,v,0), & t=0.
  \end{cases}
 \end{equation}
\end{lemma}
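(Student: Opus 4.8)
The plan is to unwind the definition of the groupoid exponential $\Exp^{\bbnabla}$ fibrewise over the deformation parameter $t$, and then precompose with the explicit bundle isomorphism $\Phi^\psi$ of \eqref{eq:bundle_iso}. The starting observation is that, by construction, $\Exp^{\bbnabla}$ respects the fibration over $\RR$: its restriction to $\ttHM|_t$ is the groupoid exponential of $G_t$ determined by the leafwise connection $\nabla^t$. For $t\neq 0$ the groupoid $G_t$ is the pair groupoid $M\times M$, whose anchor foliation is the single leaf $M$; here $\nabla^t=\psi\circ\nabla\circ\psi^{-1}=\nabla^\psi$ for every $t\neq 0$ by the graded-ness of $\nabla$, so $\Exp^{\bbnabla}|_{\ttHM|_t}$ is just the pair-groupoid exponential $\Exp^{\nabla^\psi}$ of \eqref{eq:pair_exponential}, independently of $t$. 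For $t=0$ the groupoid is the bundle of simply connected nilpotent groups $\THM$; since the anchor is zero, the anchor foliation is by points, the leafwise connection carries no information, and the groupoid exponential reduces to the fibrewise Lie group exponential $\exp\colon\tHM_x\to(\THM)_x$. Under the convention of Section \ref{sec:osculating}, in which $\THM$ equals $\tHM$ as a smooth bundle with the Baker--Campbell--Hausdorff group law, this fibrewise exponential is the identity map $\tHM\to\THM$.

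Next I would substitute the formula for $\Phi^\psi$ from \eqref{eq:bundle_iso}, namely $(x,v,t)\mapsto(x,\psi(\delta_t v),t)$ for $t\neq 0$ and $(x,v,0)\mapsto(x,v,0)$. Composing with the fibrewise description of $\Exp^{\bbnabla}$ just obtained yields \eqref{eq:EExp} directly: for $t\neq 0$ one gets $\Exp^{\nabla^\psi}(\psi(\delta_t v))$ sitting over $t$, that is $(\exp^{\nabla^\psi}_x(\psi(\delta_t v)),x,t)$, and for $t=0$ one gets $(x,v,0)$. Well-definedness on $\UU$ is then immediate from Definition \ref{def:domain_of_injectivity}: for $t\neq 0$ the membership $(x,\psi(\delta_t v))\in U$ is precisely the condition under which $\Exp^{\nabla^\psi}\circ\psi$ is defined at $\delta_t v$, while at $t=0$ there is nothing to check. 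Moreover $\UU$ is open, being the preimage of the open set $U$ under the smooth map $(x,v,t)\mapsto(x,\psi(\delta_t v))$, and it contains the zero section since $\psi(\delta_t 0)=0\in U$.

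For injectivity and the image I would argue slicewise in $t$, which is legitimate because $\Exp^{\bbnabla}\circ\Phi^\psi$ preserves the $\RR$-coordinate and $\TTHM=\bigsqcup_{t\in\RR}G_t$ is a disjoint union over $t$. On the slice $t=0$ the map is the identity of $\tHM=\THM$, hence a bijection onto $\THM\times\{0\}$. On a slice $t\neq 0$, $\delta_t$ is a bundle automorphism of $\tHM$ that carries $\{\,v : \delta_t v\in U\,\}$ bijectively onto $U$, so $v\mapsto\Exp^{\nabla^\psi}(\psi(\delta_t v))$ is the composite of that bijection with the map $\Exp^{\nabla^\psi}\circ\psi$, which is injective on $U$ by the domain-of-injectivity hypothesis; hence the slice map is injective with image $(\Exp^{\nabla^\psi}\circ\psi)(U)$, independent of $t$. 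Reassembling the slices gives injectivity of $\Exp^{\bbnabla}\circ\Phi^\psi$ on $\UU$ together with the image \eqref{eq:exponential_patch}.

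The only step that is not pure bookkeeping is the fibrewise identification of $\Exp^{\bbnabla}$ at $t=0$: recognising that for a bundle of Lie groups Nistor's exponential is the fibrewise group exponential (no connection data entering, since the leaves are points), and that under the Baker--Campbell--Hausdorff identification $\THM=\tHM$ this map is the identity; combined with the observation that graded-ness of $\nabla$ forces $\nabla^t$ to be independent of $t$ for $t\neq 0$, so that the three cases of \eqref{eq:EExp} genuinely patch into a single formula. Everything else is a routine manipulation of the dilations $\delta_t$ and the open set $U$.
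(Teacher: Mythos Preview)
Your proof is correct and follows the same approach as the paper's own proof, namely a fibrewise analysis of $\Exp^{\bbnabla}\circ\Phi^\psi$ at $t=0$ (where both maps are the identity under the BCH identification $\THM=\tHM$) and at $t\neq 0$ (where one composes \eqref{eq:bundle_iso} with the pair-groupoid exponential \eqref{eq:pair_exponential}). You supply considerably more detail than the paper---in particular, you spell out the slicewise injectivity and image computation that the paper leaves to the reader---but the underlying strategy is identical; note only the minor slip where you refer to ``three cases'' of \eqref{eq:EExp} when there are two.
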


\begin{proof}
 On the fibre $\tHM\times\{0\}$, both $\Exp^{\bbnabla}$ and $\Phi^\psi$ are the identity.  For the fibres at $t\neq0$, the stated formula follows by composing Equation \eqref{eq:bundle_iso} for $\Phi^\psi$ with Equation \eqref{eq:pair_exponential} for the exponential of the pair groupoid.  The result follows.
 \end{proof}

Note that the subset $\UU \subset \tHM \times \RR$ of the lemma is an open neighbourhood both of the zero section $M\times\{0\}\times\RR$ and of the fibre $\tHM\times\{0\}$ at $t=0$.  We will refer to the maps of the form \eqref{eq:EExp} as \emph{global exponential charts}.

\begin{theorem}
\label{thm:smooth_structure}
The groupoid $\TTHM$ admits a unique smooth structure such that $\ttHM$ is its Lie algebra, and such that for any splitting $\psi:\tHM\to TM$, any graded connection $\nabla$ on $\tHM$ and any domain of injectivity $U\subseteq \tHM$, the global exponential chart $\Exp^{\bbnabla}\!\circ\,\Phi^\psi$ is a smooth diffeomorphism from $\UU$ (as in Lemma \ref{lem:EExp}) to its image.
\end{theorem}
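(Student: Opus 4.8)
The plan is to invoke Nistor's integration theorem (as corrected in \cite{BenNis}): a Lie algebroid which decomposes as a disjoint union of Lie algebroids, each integrating to a given Lie groupoid, integrates to a Lie groupoid gluing those pieces, provided the associated groupoid exponential map is locally injective near the zero section. By Propositions \ref{prop:7a} and \ref{prop:7b}, $\ttHM$ is a Lie algebroid; by construction it is the disjoint union of $\tHM \times \{0\}$ (integrating to $\THM$) and $TM \times \RR^\times$ (integrating to $(M \times M) \times \RR^\times$). So the one thing that remains is to check the local-injectivity hypothesis, and then to extract from that the stated uniqueness and the assertion that the global exponential charts are diffeomorphisms onto their images.

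First I would fix the data: a splitting $\psi$, a graded connection $\nabla$ on $\tHM$, and the resulting smooth family $\bbnabla = (\nabla^t)_{t\in\RR}$ of connections on $\ttHM|_t$. By Lemma \ref{lem:EExp}, the map $\Exp^{\bbnabla} \circ \Phi^\psi$ is given on $\UU$ by the explicit formula \eqref{eq:EExp}, and the lemma already records that it is well-defined and injective on $\UU$ with the stated image. Thus the key hypothesis of Nistor's theorem — local injectivity of $\Exp^{\bbnabla}$ near the zero section — holds: indeed $\Phi^\psi$ is a bundle isomorphism carrying a neighbourhood of the zero section in $\tHM \times \RR$ onto one in $\ttHM$, and $\UU$ is an open neighbourhood of the zero section $M \times \{0\} \times \RR$. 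Applying \cite[Theorem 10]{BenNis} (see also \cite{Nistor:integration}) then produces a smooth structure on $\TTHM$ making it a Lie groupoid with Lie algebroid $\ttHM$, and whose smooth structure is compatible with the two given components $\THM \times \{0\}$ and $(M\times M)\times\RR^\times$.

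Next I would argue that in this smooth structure each global exponential chart $\Exp^{\bbnabla}\circ\Phi^\psi$ is a diffeomorphism from $\UU$ onto its image \eqref{eq:exponential_patch}. It is a bijection onto its image by Lemma \ref{lem:EExp}; it is smooth because the groupoid exponential of a Lie groupoid is smooth near the zero section and $\Phi^\psi$ is a smooth bundle map; and its differential along the zero section is the identity on $\ttHM$, so by the inverse function theorem it is a local diffeomorphism near $M \times \{0\} \times \RR$, hence — being also globally injective on $\UU$ — a diffeomorphism onto its image. (Here one uses that the image is open in $\TTHM$: away from $t=0$ this is clear, and at $t=0$ it is open precisely because $\Exp^{\bbnabla}$ restricts to a diffeomorphism of a zero-section neighbourhood.) Finally, uniqueness: any smooth structure making $\ttHM$ the Lie algebroid and each global exponential chart a diffeomorphism onto its image is pinned down on a neighbourhood of $\tHM \times \{0\}$ by the charts, and away from $t=0$ it must agree with the manifold structure of $(M\times M)\times\RR^\times$ (which is forced by requiring $\ttHM$ to be the Lie algebroid there); since these two regions cover $\TTHM$, the atlas is determined.

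The main obstacle is really the verification that feeds into Lemma \ref{lem:EExp} and hence into the hypothesis of \cite{BenNis}: one must check that the family $\bbnabla$ is genuinely \emph{smooth} across $t=0$, which is exactly where the grading condition $\delta_t \circ \nabla \circ \delta_t^{-1} = \nabla$ on the connection is used — without it, the pushed-forward connections $\nabla^t = (\psi\circ\delta_t)\nabla(\psi\circ\delta_t)^{-1}$ would blow up as $t\to 0$. Granting that (which is handled in the discussion preceding Lemma \ref{lem:EExp}), the remaining steps are formal applications of Nistor's theorem and the inverse function theorem; the only mild care needed is to confirm that the exponential-chart images are open and that the differential of the chart is the identity on the zero section, both of which are standard properties of groupoid exponentials.
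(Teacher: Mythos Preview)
Your approach is essentially the same as the paper's: invoke \cite[Theorem~10]{BenNis} after checking the local-injectivity hypothesis via Lemma~\ref{lem:EExp}. However, you skip one point that the paper addresses explicitly. Theorem~10 of \cite{BenNis} (as stated) produces only the $s$-\emph{simply connected} integration of the Lie algebroid. The $s$-fibres of the pair groupoid $M\times M$ are copies of $M$, which need not be simply connected; so what Nistor's theorem hands you directly is a smooth structure on
\[
  \widetilde{\TTHM} \defeq \Pi_M \times \RR^\times \;\sqcup\; \THM \times \{0\},
\]
where $\Pi_M$ is the fundamental groupoid of $M$, not on $\TTHM$ itself. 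The paper then observes that the global exponential charts from Lemma~\ref{lem:EExp} land in neighbourhoods that are diffeomorphic in both $\widetilde{\TTHM}$ and $\TTHM$, so one can build an atlas for $\TTHM$ from the standard atlas on $(M\times M)\times\RR^\times$ together with these charts, and the smooth structure descends. Your proposal applies \cite[Theorem~10]{BenNis} as if it yielded $\TTHM$ directly; you need this extra descent step (or an explicit argument that the quotient $\widetilde{\TTHM}\to\TTHM$ is a local diffeomorphism near $t=0$, which is essentially the same thing).

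Aside from this omission, your argument matches the paper's, and your added remarks on uniqueness and on why the charts are diffeomorphisms onto their images are reasonable elaborations of points the paper leaves implicit.
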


\begin{proof}
By Lemma \ref{lem:EExp}, the groupoid $\TTHM$ satisfies the injectivity condition of Theorem 10 in  \cite{BenNis}. As stated, that theorem applies only to $s$-simply connected groupoids, i.e.\ it will provide us with a smooth structure on the groupoid
\[
 \widetilde{\TTHM} \defeq \Pi_M \times \RR^\times \; \sqcup \; \tHM \times \{0\}
\]
where $\Pi_M$ is the fundamental groupoid of $M$.  But this smooth structure descends also to $\TTHM$.  We simply take an atlas for $\TTHM$ comprising the standard atlas for $M\times M \times \RR^\times$ together with the global exponential charts of Lemma \ref{lem:EExp}.  Since the global exponential charts yield diffeomorphic neighbourhoods in $\TTHM$ and $\widetilde{\TTHM}$, the result follows.
\end{proof}


\section{Adiabatic groupoids}
\label{sec:filtered_groupoids}

Connes' tangent groupoid is a particular example of the more general construction of an adiabatic groupoid, which is a deformation of a Lie groupoid $G$ to its Lie algebroid $\AG$.  This generalization becomes important when one deals with pseudodifferential operators on singular spaces, such as foliations, manifolds with boundaries, Lie manifolds and stratified manifolds.  A complete bibliography here would take too much space, but for a sampling of the groupoids involved, one could start with \cite{Connes:Integration,  MonPie, NisWeiXu,DebLesRoc}.  Following the references in these papers will give a much more rounded bibliography.

There is nothing preventing us from applying our filtered construction at this level of generality, which would permit the unification of singular manifolds with filtrations on the tangent space.
In this section, we outline the construction of the adiabatic groupoid of a filtered groupoid.  The proofs are straightforward generalizations of those above, and will be omitted.

\subsection{Filtered groupoids and the osculating groupoid}

For simplicity, we shall work here with Hausdorff Lie groupoids, although the construction works equally well for almost differentiable \cite{NisWeiXu} or longitudinally smooth \cite{Monthubert:groupoids} groupoids---\emph{i.e.}, groupoids in which the fibres are manifolds while the base is a manifold with corners.

\begin{definition}
 A Lie groupoid $G$ is called \emph{filtered} if its Lie algebroid $\AG$ is equipped with a filtration by subbundles
 \[
  0 = \mathrm{A}^0 G \leq \mathrm{A}^1 G \leq \cdots \leq \mathrm{A}^N G = \AG
 \]
 such that the module of sections $\Gamma^\infty(\mathrm{A}^\bullet G)$ is a filtered Lie algebra.
\end{definition}

The \emph{osculating Lie algebroid} $\aHG$ is the associated graded bundle of $ \AG$:
\[
 \aHG := \bigoplus \aHG[i], \qquad \text{where } \aHG[i] = \mathrm{A}^iG / \mathrm{A}^{i-1}G.
\]
It is a Lie algebroid over $M$ with the bracket of sections induced from that on $\mathrm{A}^\bullet G$ and the zero anchor.  As such, it is a bundle of nilpotent Lie algebras.  

We denote the grading maps by $\sigma_i : \mathrm{A}^iG\to \mathrm{A}^iG/\mathrm{A}^{i-1}G \subseteq \aHG$.  A bundle isomorphism $\psi: \AG \to \aHG$ is called a \emph{splitting} if $\psi|_{\mathrm{A}^iG}$ is a splitting of $\sigma_i$ for all $i$.

There is again a one-parameter family $(\delta_\lambda)_{\lambda\in\RR^\times}$ of \emph{dilations} on $\aHG$, where $\delta_\lambda$ acts on $\aHG[i]$ by multiplication by $\lambda^i$.  These are Lie algebra automorphisms.

The \emph{osculating groupoid} $\AHG$ is the bundle of connected, simply connected nilpotent Lie groups which integrates $\aHG$. The dilations $(\delta_\lambda)_{\lambda\in\RR^\times}$ integrate to a one-parameter family of groupoid automorphisms of $\AHG$.

\subsection{The adiabatic Lie algebroid}

As before, we define the filtered adiabatic groupoid by starting with the module of sections of its Lie algebroid:
\begin{equation*}
\label{eq:sections}
   \Gamma^\infty(\aaHG) := \{ \XX \in \Gamma^\infty( \AG\times\RR) \st \partial_t^k\XX|_{t=0} \in \Gamma^\infty(\mathrm{A}^kG) \text{ for all } k\geq0\}.
\end{equation*}
This $C^\infty(M\times\RR)$-module inherits a Lie bracket and anchor by restricting those of $\Gamma^\infty( \AG\times\RR)$.  Upon fixing a splitting, $\Gamma^\infty(\aaHG)$ is isomorphic to $\Gamma^\infty( \aHG\times\RR)$ via the analogue of the map $\Phi^\psi$ from Equation \ref{prop:bundle}.  We thereby obtain a smooth bundle $\aaHG$ over $M\times\RR$ whose module of sections corresponds with $\Gamma^\infty(\aaHG)$.  The smooth structure on $\aaHG$ is independent of the choice of splitting.

The maps
 \begin{align*}
  \ev_t : &\Gamma^\infty(\aaHG) \to \Gamma^\infty( \AG); \qquad &\XX \to &\XX|_t,\\
  \ev_0^H : &\Gamma^\infty(\aaHG) \to \Gamma^\infty(\aHG); &\XX \to &
    \bigoplus_i \sigma_i \left( \textfrac{1}{i!}\,\partial_t^i \XX|_{t=0}\right)
 \end{align*}
 are compatible with Lie brackets and anchors so that, algebraically,
 \[
  \aaHG \cong ( \AG \times \RR^\times_+) \sqcup (\aHG\times\{0\}).
 \]

\subsection{The adiabatic groupoid}

The \emph{adiabatic groupoid} of a filtered groupoid is
\[
  \AAHG = ( G \times \RR^\times_+) \sqcup (\AHG \times\{0\}).
\]
Each of the two components has an obvious smooth structure.  To obtain the global smooth structure, we need exponential maps. 

For this, we fix a graded connection $\nabla$ on $\aHG$ and a splitting $\psi:\aHG \to  \AG$.  We write $\nabla^\psi = \psi\circ\nabla\circ\psi^{-1}$, for the induced connection on $ \AG$, and $\Exp^{\nabla^\psi}: \AG \supset U \to G$ for the  associated groupoid exponential, where $U$ is a domain of injectivity for $\Exp^{\nabla^\psi}$.  
Then 
we have a well-defined bijective map from the neighbourhood
\begin{equation}
\label{eq:adiabatic_global_coords}
 \UU \defeq \{(x,\xi,t) \in \aHG \times\RR \st (x,\psi\circ\delta_t(\xi)) \in U\} \subset \aHG \times\RR
\end{equation}
to the set
\begin{equation}
 \left( \aHG \times \{0\} \right) \sqcup \left( \Exp^{\nabla^\psi}(U) \times \RR^\times \right) \subset \AAHG
\end{equation}
given by
\begin{equation}
\label{eq:adiabatic_EExp}
 \EExp^{\bbnabla}\!\circ\,\Phi^\psi :
  (x,\xi,t) \mapsto 
  \begin{cases}
  (\Exp^{\nabla^\psi}_x\psi(\delta_t\xi),t), & t\neq 0, \\
 (x,\xi,0), & t=0,
 \end{cases}
\end{equation}
where $\Phi^\psi: \aHG\times\RR \to \aaHG$ is the obvious analogue of the bundle isomorphism \eqref{eq:bundle_iso}.   Declaring this to be a diffeomorphism determines a smooth structure on $\AAHG$.

\begin{remark}
 This construction leads one naturally to imagine questions of analysis on manifolds with boundaries or corners in the presence of a Lie filtration.  Compare, \emph{e.g.}, \cite{Monthubert:groupoids}, \cite{AmmLauNis}, \cite{DebLesRoc} for the unfiltered case.  We hope to return to this in a future work.
\end{remark}


\bibliographystyle{alpha}
\bibliography{refs}

\def\polhk#1{\setbox0=\hbox{#1}{\ooalign{\hidewidth
  \lower1.5ex\hbox{`}\hidewidth\crcr\unhbox0}}}
\begin{thebibliography}{CGGP92}

\bibitem[ALN07]{AmmLauNis}
Bernd Ammann, Robert Lauter, and Victor Nistor.
\newblock Pseudodifferential operators on manifolds with a {L}ie structure at
  infinity.
\newblock {\em Ann. of Math. (2)}, 165(3):717--747, 2007.

\bibitem[BN03]{BenNis}
Moulay-Tahar Benameur and Victor Nistor.
\newblock Homology of algebras of families of pseudodifferential operators.
\newblock {\em J. Funct. Anal.}, 205(1):1--36, 2003.

\bibitem[CGGP92]{ChrGelGloPol}
Michael Christ, Daryl Geller, Pawe{\l} G{\l}owacki, and Larry Polin.
\newblock Pseudodifferential operators on groups with dilations.
\newblock {\em Duke Math. J.}, 68(1):31--65, 1992.

\bibitem[Con79]{Connes:Integration}
Alain Connes.
\newblock Sur la th\'eorie non commutative de l'int\'egration.
\newblock In {\em Alg\`ebres d'op\'erateurs ({S}\'em., {L}es {P}lans-sur-{B}ex,
  1978)}, volume 725 of {\em Lecture Notes in Math.}, pages 19--143. Springer,
  Berlin, 1979.

\bibitem[Con94]{Connes:NCG}
Alain Connes.
\newblock {\em Noncommutative geometry}.
\newblock Academic Press, Inc., San Diego, CA, 1994.

\bibitem[CP15]{ChoPon}
Woocheol Choi and Raphael Ponge.
\newblock Privileged coordinates and tangent groupoid for carnot manifolds.
\newblock Preprint. \url{http://arxiv.org/abs/1510.05851}, 2015.

\bibitem[DLR15]{DebLesRoc}
Claire Debord, Jean-Marie Lescure, and Fr\'ed\'eric Rochon.
\newblock Pseudodifferential operators on manifolds with fibred corners.
\newblock {\em Ann. Inst. Fourier (Grenoble)}, 65(4):1799--1880, 2015.

\bibitem[FS74]{FolSte:Estimates}
G.~B. Folland and E.~M. Stein.
\newblock Estimates for the {$\bar \partial _{b}$} complex and analysis on the
  {H}eisenberg group.
\newblock {\em Comm. Pure Appl. Math.}, 27:429--522, 1974.

\bibitem[HN79]{HelNou}
B.~Helffer and J.~Nourrigat.
\newblock Caracterisation des op\'erateurs hypoelliptiques homog\`enes
  invariants \`a gauche sur un groupe de {L}ie nilpotent gradu\'e.
\newblock {\em Comm. Partial Differential Equations}, 4(8):899--958, 1979.

\bibitem[Kna02]{Knapp:Lie_groups}
Anthony~W. Knapp.
\newblock {\em Lie groups beyond an introduction}, volume 140 of {\em Progress
  in Mathematics}.
\newblock Birkh\"auser Boston, Inc., Boston, MA, second edition, 2002.

\bibitem[Mel82]{Melin:Lie_filtrations}
Anders Melin.
\newblock Lie filtrations and pseudo-differential operators.
\newblock Preprint, 1982.

\bibitem[MM03]{MoeMrc}
I.~Moerdijk and J.~Mr{\v{c}}un.
\newblock {\em Introduction to foliations and {L}ie groupoids}, volume~91 of
  {\em Cambridge Studies in Advanced Mathematics}.
\newblock Cambridge University Press, Cambridge, 2003.

\bibitem[Mon99]{Monthubert:groupoids}
Bertrand Monthubert.
\newblock Pseudodifferential calculus on manifolds with corners and groupoids.
\newblock {\em Proc. Amer. Math. Soc.}, 127(10):2871--2881, 1999.

\bibitem[MP97]{MonPie}
Bertrand Monthubert and Fran\c{c}ois Pierrot.
\newblock Indice analytique et groupo\"\i des de {L}ie.
\newblock {\em C. R. Acad. Sci. Paris S\'er. I Math.}, 325(2):193--198, 1997.

\bibitem[Nis00]{Nistor:integration}
Victor Nistor.
\newblock Groupoids and the integration of {L}ie algebroids.
\newblock {\em J. Math. Soc. Japan}, 52(4):847--868, 2000.

\bibitem[NWX99]{NisWeiXu}
Victor Nistor, Alan Weinstein, and Ping Xu.
\newblock Pseudodifferential operators on differential groupoids.
\newblock {\em Pacific J. Math.}, 189(1):117--152, 1999.

\bibitem[Pon06]{Ponge:groupoid}
Rapha{\"e}l Ponge.
\newblock The tangent groupoid of a {H}eisenberg manifold.
\newblock {\em Pacific J. Math.}, 227(1):151--175, 2006.

\bibitem[SH16]{HigSad}
Ahmad Reza Haj~Saeedi Sadegh and Nigel Higson.
\newblock Euler-like vector fields, deformation spaces and manifolds with
  filtered structure.
\newblock Preprint. \url{http://arxiv.org/abs/1611.05312}, 2016.

\bibitem[Tay]{Taylor:Microlocal}
Michael Taylor.
\newblock Noncommutative microlocal analysis, part {I} (revised version).
\newblock http://www.unc.edu/math/Faculty/met/NCMLMS.pdf.

\bibitem[vE05]{VanErp:Thesis}
Erik van Erp.
\newblock {\em The {A}tiyah-{S}inger index formula for subelliptic operators on
  contact manifolds}.
\newblock ProQuest LLC, Ann Arbor, MI, 2005.
\newblock Thesis (Ph.D.)--The Pennsylvania State University.

\bibitem[vE10a]{VanErp:I}
Erik van Erp.
\newblock The {A}tiyah-{S}inger index formula for subelliptic operators on
  contact manifolds. {P}art {I}.
\newblock {\em Ann. of Math. (2)}, 171(3):1647--1681, 2010.

\bibitem[vE10b]{VanErp:II}
Erik van Erp.
\newblock The {A}tiyah-{S}inger index formula for subelliptic operators on
  contact manifolds. {P}art {II}.
\newblock {\em Ann. of Math. (2)}, 171(3):1683--1706, 2010.

\bibitem[vEY15]{VanYun:PsiDOs}
Erik van Erp and Robert Yuncken.
\newblock A groupoid approach to pseudodifferential operators.
\newblock Preprint. \url{http://arxiv.org/abs/1511.01041}, 2015.

\end{thebibliography}

\end{document}